\documentclass[12pt]{article}
\usepackage{amsmath,mathtools,dsfont,amssymb}
\usepackage{xcolor}
\usepackage{latexsym}
\usepackage{amsthm}
%\usepackage[pagewise]{lineno}\linenumbers

%%%%%%%%New Commands %%%%%

\newcommand{\A}{\mathcal{A}}
\newcommand{\U}{\mathcal{U}}

\newcommand{\PA}{\mathcal{P}}
\newcommand{\SA}{\mathcal{S}}

\newcommand{\M}{\widetilde{M}}

\newcommand{\Q}{\mathcal{Q}}

\newcommand{\B}{\mathcal{B}}

\newcommand{\RR}{\mathbb{R}} %Real numbers
 %Natural numbers
 %Integers
 %Probability operator
\newcommand{\Exp}{\mathbb{E}} %Expectation operator
%\newcommand{\X}{\mathcal{X}}

 %Definition

\newcommand{\X}{\widetilde{X}}
%\newcommand{Phi}{\widetilde{\Phi}}

%%%%%% end of New Commands%%%%%%

\newtheorem{theorem}{Theorem}

\begin{document}

\begin{center}
\textbf{ RISK-SENSITIVE CONTROL, SINGLE CONTROLLER\\ \ \\  GAMES AND LINEAR PROGRAMMING}
\end{center}

\bigskip

\begin{center}
VIVEK S.\ BORKAR\footnote{The author was supported in part by a S.\ S.\ Bhatnagar Fellowship from the Council for Scientific and Industrial Research, Government of India. He thanks Prof.\ K.\ S.\ Mallikarjuna Rao for bringing to his notice the work of Vrieze.}  \\
DEPARTMENT OF ELECTRICAL ENGINEERING,\\
INDIAN INSTITUTE OF TECHNOLOGY BOMBAY, \\
POWAI, MUMBAI 400076, INDIA, \\
Email: borkar.vs@gmail.com
\end{center}

\bigskip

\noindent \textbf{Abstract:} This article recalls the recent work on a linear programming formulation of infinite horizon risk-sensitive control via its equivalence with a single controller game,  using a classic work of Vrieze. This is then applied to a constrained risk-sensitive control problem with a risk-sensitive cost and risk-sensitive constraint. This facilitates a Lagrange multiplier based resolution thereof. In the process, this leads to an unconstrained linear program and its dual, parametrized by a parameter that is a surrogate for Lagrange multiplier. This also opens up the possibility of a primal - dual type numerical scheme wherein the linear program is a subroutine within the subgradient ascent based update rule for the Lagrange multiplier. This equivalent unconstrained risk-sensitive control formulation  does not seem obvious without the linear programming equivalents as intermediaries. We also discuss briefly other related algorithmic possibilities for future research.

\bigskip

\noindent \textbf{Key words:} risk-sensitive control; single controller games; linear programming, constrained risk-sensitive control; dynamic programming;  Lagrange multipliers.

\bigskip

\section{Introduction} Risk-sensitive control, which aims to minimize expectation of exponentiated sum (or integral if in continuous time) of per stage costs, was introduced in the finite time horizon framework by Howard and Matheson \cite{Howard} and was introduced to the control community by Jacobson \cite{Jac}. It received wide attention through the seminal work of Sir Peter Whittle \cite{Whittle1}, \cite{Whittle2}. The original motivation usually was to account for fluctuations, in view of the common criticism of expected cost or reward that it does not account for fluctuations around the average. Simple tweaks to fix this problem, such as considering a weighted sum of mean cost and  variance, lose the principle of time-consistency on which dynamic programming is based. That is, the segment  from some time $t>0$ onward of an optimal trajectory initiated at time zero may not remain optimal at time $t$. Risk-sensitive control on the contrary does account for fluctuations, being a strictly convex function of the cumulative running cost. At the same time it  allows for multiplicative dynamic programs that retain time consistency. It also has other selling points: it naturally captures the `compounding effect' which makes it attractive for finance applications \cite{Bielecki} and has close connections with robust control \cite{Basar}. See \cite{Biswas} for an extensive survey of this field.\\

Linear programming formulations of Markov decision processes go back to Howard \cite{How}, Manne \cite{Manne}, Kallenberg \cite{Kal}. For a fixed Markov policy (or \textit{stationary} Markov policy as appropriate), the dynamic programming equations for the classical cost criteria reduce to a system of linear equations that allow us to compute the cost functional under consideration for that particular policy, using the so called `one step analysis'. Thus in hindsight, existence of an equivalent linear program for the control problem should not come as a surprise. In fact, for classical costs/rewards, the control problems are `envelopes' of linear problems in a sense made precise by the Nisio semigroup \cite{Nisio}. For risk-sensitive control, fixing such a stationary Markov policy leads to an eigenvalue problem which is already nonlinear. Thus the validity of an equivalent linear program is not automatic. Fortunately, risk-sensitive control is equivalent to a zero sum game, a fact that was observed early in its history. What's more, it is equivalent to a \textit{single controller} zero sum game \cite{Filar} wherein only one of the two controllers controls the transition probabilities, the other can influence only the payoff. For this special case, an equivalent linear program was derived by Vrieze \cite{Vrieze}. This fact was exploited in order  to derive equivalent linear programs for the infinite horizon or `ergodic' risk-sensitive control in \cite{Ari}, \cite{B-CDC}. \\

The main contribution of this work is to the constrained risk-sensitive control problem, wherein one seeks to minimize a risk-sensitive cost with a bound on another risk-sensitive cost with a different per stage cost function. Constrained control problems for classical cost structures are themselves quite classical \cite{Derman}, \cite{Beutler} (see \cite{Altman} for a book length treatment). But because of the exponential nonlinearity applied to the cumulative running cost, the risk-sensitive case is not as straightforward as  the classical problems. However, the equivalent linear programs open a door to its resolution. Indeed, going via the equivalent linear programs, we are able to give a clean treatment of the constrained risk-sensitive control, \textit{en route} providing a  linear programming based computational scheme for it.\\

We summarize the developments in  \cite{Ari}, \cite{B-CDC} in the next section, which also takes this opportunity to fix some notational issues, typos and other small errors in \textit{ibid}. Section 3 is devoted to the aforementioned results on constrained risk-sensitive control. Section 4 concludes by pointing out future possibilities that include application of reinforcement learning to this problem. 

\section{Risk-sensitive control and its equivalent\\ linear programs}

Consider a controlled Markov chain $X_n, n \geq 0,$ on a finite state space $\SA$, controlled by a control process $U_n, n \geq 0,$ taking values in a finite action space $\A$, and evolving according to 
$$P(X_{n+1} = j|X_m, U_m, m \leq n) = p(j|X_n,U_n), \ n \geq 0,$$
for a prescribed controlled transition kernel $p(\cdot | \cdot , \cdot ) : \SA^2\times  \A \mapsto [0,1]$ satisfying $\sum_jp(j|i,u) = 1 \ \forall \ i,u$. We call such $\{U_n\}$ as \textit{admissible controls}. For a prescribed `running cost' $c(\cdot,\cdot): \SA\times \A \mapsto \RR^+$, consider the infinite horizon  risk-sensitive cost
\begin{equation}
\limsup_{N\uparrow\infty}\frac{1}{N}\Exp\left[e^{\sum_{m=0}^{N-1}c(X_m,U_m)}\big| X_0 = i\right]. \label{cost}
\end{equation} 
The infinite horizon risk-sensitive control problem for cost minimization seeks to minimize (\ref{cost}) over all admissible controls. A symmetric definition can be given for the risk-sensitive reward maximization problem by replacing `limsup' above by `liminf' and `cost minimization' by `reward maximization'. Recently, an equivalent linear programming formulation for the infinite horizon risk-sensitive control was given  in \cite{B-CDC}, \cite{Ari}, using its equivalence to a single controller game and using the linear programs for the latter derived by Vrieze \cite{Vrieze}. We  now briefly recall the key results of  \cite{Ari}.\\

 We do not assume irreducibility, whence
(\ref{cost}) can depend on the initial condition $i$. Recall that a control $\{U_n\}$ is called a stationary policy if $U_n = v(X_n)$ for all $n$ and some $v: \SA \mapsto \A$. By abuse of notation, it is often identified with this map $v(\cdot)$. It is called a stationary randomized policy if for each $n$, $U_n$ is conditionally independent of $X_m, U_m, m < n,$ given $X_n$. This is identified with the map $i \in \SA \mapsto\varphi(\cdot|i):=$ the conditional law of $U_n$ given $X_n = i$. We view $i \mapsto \varphi( \cdot |i)$ as a map $\SA\mapsto\PA(\A)$, where we denote by $\PA(\cdots)$ the simplex of probability vectors on the finite set `$\cdots$'. Denote by $\U_s, \U_{sr}$ the sets of stationary, resp.\ stationary randomized policies. Under a stationary policy (resp., stationary randomized policy) $U_n = v(X_n)$ (resp., $U_n \approx \varphi(\cdot|X_n)$)  $\forall \ n \geq 0$, for some $v: \SA \mapsto \A$ (resp., $\varphi: \SA \mapsto \PA(\A)$), (\ref{cost}) exists as a well defined limit, thanks to the `multiplicative ergodic theorem' for Markov chains \cite{Balaji}. We denote it  as $\lambda_i^v$ (resp., $\lambda_i^\varphi$) when $X_0 = i$. Let $\lambda^* := \max_i\min_v\lambda^v_i$. \\

Let $\Q :=$ the set of transition matrices $q = [[q(j|i)]]_{i,j\in  \SA}$ on $\SA$. Let $\Q(i) := \{q_i := q(\cdot|i), q \in \Q\}, i \in   \SA$, which is a copy of the simplex of probability vectors on $\SA$, indexed by $i\in  \SA$. Also define
$$\tilde{c}(i,u,q_i) := c(i,u) - D(q(\cdot|i)\|p(\cdot|i,u)).$$
Here
$$D(q(\cdot|i)\|p(\cdot|i,u)) := \sum_{j\in  \SA}q(j|i)\log\left(\frac{q(j|i)}{p(j|i,u)}\right)$$
if $q(\cdot|i) << p(\cdot|i,u)$, and  $= -\infty$ otherwise, is the Kullback-Leibler divergence. Consider a controlled Markov chain $\{\X_n\}$ on $\SA$ defined as follows. Its action space at $i\in  \SA$ is $\Q(i)\times  A$. The controlled transition probabilities are
$$\tilde{p}(j|i,q,u) = q(j|i), \ i,j\in  \SA.$$
Note the dual role of $q_i$, resp.\ as a transition probability and as a control.  The running \textit{payoff} for this controlled Markov chain  is $\tilde{c}(i,u,q_i)$ as above. \\

We shall consider only stationary policies $v: \SA \mapsto A$ for the state-dependent choice of the control variable $u$.  Let $\M_q$ denote the set of stationary distributions  for $q \in   \Q$. Define $\tilde{c}_v(i,q_i) := \tilde{c}(i,q_i,v(i))$. Then the risk-sensitive control problem above is equivalent to a zero sum stochastic game with payoff \cite{Ari}
\begin{equation}
\hat{\Phi}(q,v) = \max_{\pi\in  \M_q}\sum_{i\in  \SA}\pi(i)\tilde{c}_v(i,q_i). \label{phihat}
\end{equation}
This is a \textit{single controller game} \cite{Filar} in the sense that the transition probabilities are controlled by only one of the controllers (specifically, a fictitious entity that chooses the $q_i$), the other controller (the actual one, i.e., the one that chooses the $v$) controls only the payoff. It can be shown that this game has a value and it equals $\lambda^*$ \cite{Ari}. Furthermore, the following `min-max' theorem holds: \\

\begin{theorem}\label{minmax} \cite{Ari} $\hat{\Phi}^* := \inf_{v \in \U_s}\sup_{q \in \Q}\hat{\Phi}(q,v) = \sup_{q \in \Q}\inf_{v \in \U_s}\hat{\Phi}(q,v)$ and furthermore, there exist $v^* \in \U_s$ and $q^* \in \Q$ such that 
$$\hat{\Phi}^*= \inf_{v \in \U_s}\hat{\Phi}(q^*,v) = \sup_{q \in \Q}\hat{\Phi}(q, v^*) = \hat{\Phi}(q^*,v^*).$$
\end{theorem}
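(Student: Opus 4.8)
The inequality $\hat{\Phi}^*\ge\sup_{q\in\Q}\inf_{v\in\U_s}\hat{\Phi}(q,v)$ is weak duality and holds unconditionally, so the content is the reverse inequality together with the attainment claims. Two features block a direct appeal to a minimax theorem: $\U_s$ is not convex, and for fixed $v$ the map $q\mapsto\hat{\Phi}(q,v)$ is a maximum over the $q$-dependent polytope $\M_q$, hence not visibly concave. I would remove both by (i) reparametrising the maximising player and (ii) temporarily enlarging $\U_s$ to the convex compact set $\U_{sr}$ of stationary randomised policies, on which $\hat{\Phi}$ is extended by $\hat{\Phi}(q,\varphi):=\max_{\pi\in\M_q}\sum_{i}\pi(i)\sum_{u}\varphi(u|i)\tilde{c}(i,u,q_i)$. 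For (i): a pair $(q,\pi)$ with $\pi\in\M_q$ carries the same data as a \emph{flow} $\nu=[[\nu(i,j)]]_{i,j\in\SA}$ with $\nu\ge0$, $\sum_{i,j}\nu(i,j)=1$ and $\sum_{j}\nu(i,j)=\sum_{j}\nu(j,i)$ for all $i$, via $\nu(i,j)=\pi(i)q(j|i)$ (the rows $q(\cdot|i)$ over $\{i:\pi(i)=0\}$ being immaterial); these $\nu$ form a compact convex polytope $\F$. Writing $\nu_1(i):=\sum_{j}\nu(i,j)$, $q^\nu(j|i):=\nu(i,j)/\nu_1(i)$ and
$$G(\nu,\varphi):=\sum_{i}\nu_1(i)\sum_{u}\varphi(u|i)\,\tilde{c}(i,u,q^\nu_i)=\sum_{i,j}\nu(i,j)\sum_{u}\varphi(u|i)c(i,u)-\sum_{i,u}\varphi(u|i)\sum_{j}\nu(i,j)\log\frac{\nu(i,j)}{\nu_1(i)\,p(j|i,u)},$$
one checks that $\sup_{q}\hat{\Phi}(q,\varphi)=\sup_{\nu\in\F}G(\nu,\varphi)$, and that for each fixed $q$, $\inf_{\varphi\in\U_{sr}}\hat{\Phi}(q,\varphi)=\inf_{v\in\U_s}\hat{\Phi}(q,v)=\max_{\pi\in\M_q}\sum_{i}\pi(i)\min_{u}\tilde{c}(i,u,q_i)$ --- the last equalities because the extreme points of $\M_q$ are the ergodic occupation measures on the recurrent classes of $q$, all with nonnegative weights, so lowering each coordinate of the cost vector $(\tilde{c}(i,v(i),q_i))_i$ lowers the maximum, and hence the pointwise-minimising, \emph{pure}, selection of $v$ is optimal.

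Next I would record the convexity of $G$ on $\F\times\U_{sr}$: it is affine in $\varphi$; and for fixed $\varphi$ it is upper semicontinuous and concave in $\nu$, since $\sum_{j}\nu(i,j)\log\frac{\nu(i,j)}{\nu_1(i)p(j|i,u)}$ is a Kullback--Leibler functional of quantities affine in $\nu$, hence convex in $\nu$, and summing over $i,u$ against the nonnegative weights $\varphi(u|i)$ preserves convexity, while upper semicontinuity comes from the usual lower semicontinuity of relative entropy (with value $+\infty$ where absolute continuity fails). Since $\F$ is compact convex and $\U_{sr}$ convex, Sion's minimax theorem gives
$$\inf_{\varphi\in\U_{sr}}\sup_{\nu\in\F}G(\nu,\varphi)=\sup_{\nu\in\F}\inf_{\varphi\in\U_{sr}}G(\nu,\varphi).$$
By the identities above the right-hand side equals $\hat{\Phi}_*:=\sup_{q}\inf_{v\in\U_s}\hat{\Phi}(q,v)$ and the left-hand side equals $\inf_{\varphi\in\U_{sr}}\sup_{q}\hat{\Phi}(q,\varphi)$, and standard semicontinuity/compactness arguments furnish a maximiser $\nu^*\in\F$ on the right.

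It then remains to identify $\hat{\Phi}^*=\inf_{v\in\U_s}\sup_{q}\hat{\Phi}(q,v)$ with $\inf_{\varphi\in\U_{sr}}\sup_{q}\hat{\Phi}(q,\varphi)$; as $\U_s\subseteq\U_{sr}$, only the inequality $\le$ needs proof, i.e.\ that \emph{randomisation does not help the minimiser}. This is where the equivalence with risk-sensitive control enters: $\sup_{q}\hat{\Phi}(q,\varphi)=\sup_{\nu\in\F}G(\nu,\varphi)$, after one more reparametrisation of the supremum and recognition of the Donsker--Varadhan rate function, is the variational formula for the logarithm of the principal eigenvalue of the nonnegative matrix with $(i,j)$-entry $\sum_{u}\varphi(u|i)e^{c(i,u)}p(j|i,u)$, i.e.\ it equals the risk-sensitive cost $\lambda^\varphi$ of $\varphi$; so the required equality reads $\min_{v\in\U_s}\lambda^v=\inf_{\varphi\in\U_{sr}}\lambda^\varphi\,(=\lambda^*)$, namely the existence of a \emph{pure} optimal stationary policy for the ergodic risk-sensitive control problem. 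I expect this to be the main obstacle; it rests on the multiplicative ergodic theorem and the existence of a pure minimising selector in the multiplicative (risk-sensitive) dynamic programming equation \cite{Balaji} (see also \cite{Biswas}), and is precisely the fact imported in \cite{Ari} --- indeed it is already subsumed in the statement, recalled above, that the game has value $\lambda^*$. The remaining ingredients --- the flow reparametrisation, the convexity and semicontinuity bookkeeping, Sion's theorem, and the compactness arguments for attainment --- are routine.

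Granting this, $\hat{\Phi}^*=\hat{\Phi}_*$, which is the asserted min-max equality. For the saddle point, put $q^*:=q^{\nu^*}$ and let $v^*\in\U_s$ attain $\inf_{v\in\U_s}\sup_{q}\hat{\Phi}(q,v)$, which exists by the previous paragraph and the finiteness of $\U_s$. Since $\nu^*_1\in\M_{q^*}$ one has $\inf_{v\in\U_s}\hat{\Phi}(q^*,v)\ge\inf_{\varphi\in\U_{sr}}G(\nu^*,\varphi)=\hat{\Phi}_*$, and with $\inf_{v\in\U_s}\hat{\Phi}(q^*,v)\le\hat{\Phi}_*$ this forces $\inf_{v\in\U_s}\hat{\Phi}(q^*,v)=\hat{\Phi}_*$. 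Then the chain
$$\inf_{v\in\U_s}\hat{\Phi}(q^*,v)=\hat{\Phi}_*=\hat{\Phi}^*=\sup_{q}\hat{\Phi}(q,v^*)\ge\hat{\Phi}(q^*,v^*)\ge\inf_{v\in\U_s}\hat{\Phi}(q^*,v)$$
collapses all of these to a common value, which is the displayed saddle-point assertion.
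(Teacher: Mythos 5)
You should first note that the paper itself contains no proof of Theorem \ref{minmax}: it is recalled verbatim from \cite{Ari}, so the comparison is with the route of that cited work (the single-controller game structure, the variational formula of \cite{Ananth}, and Vrieze/Kallenberg-type linear programming arguments) rather than with an in-text argument. Your packaging — reparametrising the maximiser by ergodic flows $\nu(i,j)=\pi(i)q(j|i)$, exploiting concavity/upper semicontinuity of the entropy term in $\nu$ and affineness in $\varphi$, invoking a Sion-type minimax theorem on $\F\times\U_{sr}$, and then purifying the minimiser — is a sensible and genuinely different organisation, and your endgame (constructing $q^*$ from $\nu^*$, existence of $v^*$ from finiteness of $\U_s$, collapsing the chain of inequalities) is correct as far as it goes.

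The genuine gap is exactly the step you yourself flag and then wave through: the identity $\inf_{\varphi\in\U_{sr}}\sup_{q\in\Q}\hat{\Phi}(q,\varphi)=\min_{v\in\U_s}\sup_{q\in\Q}\hat{\Phi}(q,v)$. You justify it by asserting $\sup_q\hat{\Phi}(q,\varphi)=\lambda^\varphi$ and the existence of a pure optimal stationary policy, and then say this is ``already subsumed in the statement that the game has value $\lambda^*$.'' That is circular: the assertion that the game has value $\lambda^*$ is precisely the content of the theorem you are proving, established in \cite{Ari}; it cannot be imported as an ingredient. What you may legitimately import is the variational formula of \cite{Ananth}, but that is proved under irreducibility, whereas this paper explicitly drops irreducibility; in the multichain case $\lambda^\varphi_i$ depends on the initial state, the identification of $\sup_q\hat{\Phi}(q,\varphi)$ with $\max_i\lambda^\varphi_i$ needs a separate argument, and the existence of a pure minimiser of $\varphi\mapsto\max_i\lambda^\varphi_i$ is exactly the delicate point that \cite{Ari} and \cite{B-CDC} resolve via the multichain dynamic programming equations and LP duality — it is not ``routine'' and until it is supplied your min-max equality is unproved. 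A secondary technical point: your appeal to Sion needs care because $G$ takes the value $-\infty$ where absolute continuity fails, and then $G(\nu,\cdot)$ is not lower semicontinuous in $\varphi$ (a term $\varphi(u|i)\cdot(-\infty)$ jumps to $0$ as $\varphi(u|i)$ tends to $0$); a Kneser--Fan version requiring only upper semicontinuity and concavity in the compact variable, or a perturbation of $p$ to strictly positive kernels followed by a limit, repairs this, but it should be addressed explicitly.
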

That is, there is a saddle point equilibriu for this zero sum game.\\

Then the linear program associated with the problem of minimizing (\ref{cost}) over all stationary policies can be derived as in \cite{Vrieze} and is given by \cite{Ari}:

\bigskip

\noindent (LP-P) Minimize $\sum_{i\in  \SA}\beta_i$ subject to:
\begin{eqnarray*}
\beta_i &\geq& \sum_{j\in  \SA}q(j|i)\beta_j, \ i\in  \SA,\\
V_i &\geq& \sum_{u\in   A}\tilde{c}(i,q_i,u)y_i(u) - \beta_i + \sum_{j\in  \SA}q(j|i)V_j, \ i \in   \SA,\\
y_i(u)&\geq& 0, \ \sum_jy_j(u) = 1.
\end{eqnarray*}

This is the `primal' linear program. The dual linear program is:

\bigskip

\noindent (LP-D) Maximize $\sum_iw_i$ subject to:
\begin{eqnarray}
\sum_{i\in  \SA}\int_{\Q}(\delta_{ij} - q(j|i))\mu(i,dq) &=& 0, \ j\in  \SA, \nonumber\\
\sum_{i\in  \SA}\int_{\Q}(\delta_{ij} - q(j|i))\nu(i,dq) + \int_{\Q}\mu(j,dq) &=& 1, \ j\in  \SA, \nonumber\\
\int_{\Q}\tilde{c}(j,u,q_j)\mu(j,dq) &\geq& w_j, \ j\in  \SA, \label{special} \\
\sum_j\int\mu(j,dq') = 1, \ \mu(i,q),\ \nu(i,q) &\geq& 0, \ (i,q)\in  \SA\times  \Q. \nonumber
\end{eqnarray}
Here $\delta_{ij}$ denotes the Kronecker delta.\\

The work of \cite{Vrieze} that is cited here is for finite state and action spaces. The action spaces $\Q(i)$ above are not finite, so one has to go through a sequence of finite approximations thereof, followed by a limiting argument. See \cite{Ari},  \cite{B-CDC} for details.  One can show that these linear programs are feasible and have bounded solutions, there is no duality gap,  and the optimal solution is precisely the value of the zero sum game defined in the preceding section. Furthermore, the optimal stationary randomized policy is optimal among all admissible policies and can be recovered from the solution of the primal program as $i \mapsto y_i(\cdot)$. (We omit the details, referring the reader to \cite{Ari}). \\

One can reverse engineer the dynamic programming equations from this. These are as follows \cite{Ari}.

\medskip

\begin{theorem}
The dynamic programming equation, given by:
\begin{eqnarray*}
\Psi_i  &=& \max_{q_i\in  \Q(i)}\sum_jq_{ij}\Psi_j, \\
\Psi_i + V_i &=& \min_{u\in   A}\max_{q_i\in   B_i}\left[\tilde{c}(i,u,q_i) + \sum_jq(j|i)V_j\right], 
\end{eqnarray*}
where
\begin{eqnarray*}
B_i &:=& \left\{q_i \in   \Q(i) : \sum_j q(j|i)\Psi_j = \Psi_i\right\},
\end{eqnarray*}
has a solution $\{(\Psi_i, V_i)\}_{i\in  \SA}$. We also have $\Psi_i = \beta_i \ \forall\  i\in  \SA$ where $\{\beta_i\}$ is the solution to (LP) and $=$ the value of the above zero sum game with initial condition $i$, $\forall i$. Furthermore, $\lambda^* = \max_i\Psi_i$. 
\end{theorem}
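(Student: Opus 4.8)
The plan is to read off the two equations, and the stated identifications, as the optimality conditions of the primal--dual pair (LP-P)--(LP-D). We take as given (from \cite{Ari}, \cite{B-CDC}, as recalled above) that both programs are feasible, that their optimal values are finite and equal with no duality gap, and that this common value is the value of the single controller game of Theorem~\ref{minmax}, hence equals $\hat\Phi^* = \lambda^*$. Fix an optimal solution $(\beta^*, V^*, y^*)$ of (LP-P) and an optimal solution $(\mu^*, \nu^*, w^*)$ of (LP-D), and set $\Psi_i := \beta_i^*$ and $V_i := V_i^*$; the existence assertion will follow once this pair is shown to satisfy the displayed equations.

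First I would obtain the ``$\ge$'' half of each equation from primal feasibility alone. Since the first block of constraints of (LP-P) holds for every $q \in \Q$, we get $\Psi_i \ge \sum_j q(j|i)\Psi_j$ for every $q_i \in \Q(i)$, hence $\Psi_i \ge \max_{q_i\in\Q(i)}\sum_j q(j|i)\Psi_j$. For the second block, note that it holds for every $q \in \Q$ and every probability vector $y_i(\cdot)$ on $\A$, and that a linear functional on the simplex $\PA(\A)$ attains its extrema at Dirac masses; feasibility therefore gives $V_i + \Psi_i \ge \tilde c(i,u,q_i) + \sum_j q(j|i)V_j$ for every $u\in\A$, in particular for every $q_i \in B_i$, so $V_i + \Psi_i \ge \min_{u\in\A}\max_{q_i\in B_i}[\tilde c(i,u,q_i) + \sum_j q(j|i)V_j]$. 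Thus $(\Psi, V)$ is automatically a supersolution of the dynamic programming equation; the work is to upgrade both inequalities to equalities.

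The matching lower bounds come from complementary slackness between $(\beta^*, V^*, y^*)$ and $(\mu^*, \nu^*, w^*)$. The slackness conditions say that the $q$-marginal of $\mu^*(i,\cdot)$ is carried by the $q_i$ at which the first primal constraint is active at $i$, that the $V_i$-slackness is paired with the dual flow equation governed by $\nu^*$, and that $w_j^*$ is active against the constraint (\ref{special}); combining these with the two flow/normalization identities of (LP-D) forces the first primal inequality to be an equality at every $i$ (a slack constraint at some $i$ would leave the corresponding flow equation incompatible with minimality of $\sum_i\beta_i^*$), which is the first dynamic programming equation, and forces $y_i^*(\cdot)$ to attain the outer minimum over $u\in\A$ while the $q$-marginal of $\mu^*(i,\cdot)$ -- which, once the first equation holds, is supported on $B_i$ -- attains the inner maximum, which is the second equation. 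For the identifications: $\Psi_i = \beta_i^*$ is by definition the primal optimum, which by the cited results equals the value of the single controller game started at $i$; and maximizing the first equation over $i$, with $\Q(i)$ the full simplex so that $\max_{q_i}\sum_j q(j|i)\Psi_j = \max_j \Psi_j$, shows $\Psi$ is in fact constant, and Theorem~\ref{minmax} identifies this constant with $\hat\Phi^* = \lambda^*$, i.e.\ $\lambda^* = \max_i \Psi_i$.

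I expect the main obstacle to be exactly this complementary-slackness bookkeeping: (LP-D) is posed over the \emph{non-finite} action spaces $\Q(i)$ and, following \cite{Vrieze} as implemented in \cite{Ari}, \cite{B-CDC}, is reached only through a sequence of finite approximations of the $\Q(i)$ and a limiting argument, so one must check that the slackness relations, and the equalities they force, survive the passage to the limit. A secondary point is justifying the restriction of the inner $q$-maximization in the second equation to $B_i$: this is the familiar multichain feature of average-cost dynamic programming, and here it is in fact vacuous because the $q$-player ranges over the whole simplex (so $B_i = \Q(i)$), but it still has to be argued. A more self-contained alternative would bypass the linear programs and instead run a vanishing-discount argument on the game $\{\X_n\}$ directly, invoking the multiplicative ergodic theorem of \cite{Balaji}; this trades the infinite-dimensional linear program for a redo of the ergodic analysis.
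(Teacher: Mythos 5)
The paper does not actually prove this theorem; it is recalled from \cite{Ari}, the indicated route being exactly the one you take, namely to reverse engineer the dynamic programming equations from the primal--dual pair (LP-P)/(LP-D). Your high-level plan is therefore aligned with the source, but two of your steps have genuine problems. The more serious one is your conclusion that $\Psi$ is constant and that the restriction to $B_i$ is ``vacuous because the $q$-player ranges over the whole simplex.'' This cannot be the intended reading: the theorem is stated precisely because irreducibility is \emph{not} assumed, so the game value genuinely depends on the initial state ($\Psi_i$ is the value started at $i$, and $\lambda^* = \max_i\Psi_i$ would be pointless for a constant $\Psi$), and the paragraph immediately following the theorem partitions $\SA$ into sets $I_\ell$ on which $\Psi$ takes different values, in parallel with Kallenberg's multichain equations. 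The maximization over $q_i$ in the first equation (and the corresponding block of constraints in (LP-P)) is effective only over the kernels the adversary can actually use, i.e.\ those with $q(\cdot|i) \ll p(\cdot|i,u)$, off which $\tilde c = -\infty$; with that restriction the first constraint does not force $\beta_i \ge \max_j\beta_j$, $\Psi$ is constant only on the pieces $I_\ell$, and $B_i$ is in general a proper subset doing real work in the second equation. Your literal reading of $\Q(i)$ as the unrestricted simplex collapses exactly the multichain structure the theorem is about.

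The second problem is the claim that the ``$\ge$'' half of the second equation follows from primal feasibility because the constraint ``holds for every probability vector $y_i(\cdot)$'': in (LP-P), $y$ is a decision variable (it encodes the minimizer's randomized stationary policy, recovered from the optimal primal solution), not a universally quantified index, so feasibility only yields the inequality averaged under $y^*_i$, namely $V_i + \beta_i \ge \max_{q_i}\sum_u y^*_i(u)\bigl[\tilde c(i,u,q_i) + \sum_j q(j|i)V_j\bigr]$, which gives the max--min bound $\max_{q_i\in B_i}\min_u[\cdots]$ but not, by itself, the min--max quantity in the theorem. Passing from this to equality with $\min_{u}\max_{q_i\in B_i}[\cdots]$, and to the per-state identification of $\beta_i$ with the game value started at $i$, is exactly where the single-controller structure, the saddle point of Theorem~\ref{minmax} and complementary slackness must be brought in, and your complementary-slackness paragraph is too schematic to carry that weight; you rightly flag, in addition, that slackness for the semi-infinite program (LP-D) has to be justified through the finite approximations of the $\Q(i)$ and a limiting argument. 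So the skeleton is the right one, but the two equalities and the identification $\Psi_i = \beta_i$ are not yet established by the argument as written.
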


The state space $\SA$ can be partitioned into disjoint subsets as $\SA = \cup_{\ell = 1}^kI_\ell$ where $I_j := \{j : \Psi_i = \beta_j, \  \forall \ i\in   I_j\}$, $1 \leq j \leq k$.
 Let $\Lambda_i := e^{\lambda_i} \ \forall i$ and $\Lambda^* := e^{\lambda^*}$. Performing the maximization with respect to $q_i \in  \Q(i)$  exactly, one can write the dynamic programming equation as
\begin{eqnarray*}
\Lambda^* &=& \max_i\Lambda_i, \\
\Lambda_i\zeta_i &=& \min_u\left(\sum_{j\in   I_i}p(j|i,u)e^{c(i,u)}\zeta_j\right), \ i \in   I_\ell, 1 \leq \ell \leq k, \\
\Lambda_i &=& \min_{B_i^*}\sum_{j\in   I_i}\left(\frac{p(j|i,u)e^{c(i,u)}\zeta_j}{\sum_{j'}p(j'|i,u)e^{c(i,u)}\zeta_{j'}}\right)\Lambda_{j'}, \ (i,j) \in   I_\ell, 1 \leq \ell \leq k,
\end{eqnarray*}
where $\B^*_i$ is the set of minimizers in the second equation above. This is the counterpart for risk-sensitive control of the classical result of Kallenberg \cite{Kal} for average cost dynamic programming equation for multi-chain problems.
Analogous results are possible  for risk-sensitive reward problems, see \cite{Ari}, \cite{B-CDC}.

\section{Risk-sensitive control with constraints}

As a spin-off of the foregoing development, we can handle risk-sensitive control with risk-sensitive constraints. Consider minimization of (\ref{cost}) subject to an additional constraint
\begin{equation}\label{constraint}
\limsup_{N\to\infty}\frac{1}{N}\log\Exp\left[e^{\sum_{t=0}^{N-1}k(X_m,U_m)}\right] \leq C, 
\end{equation}
for a prescribed $k: S\times  \A \mapsto \RR$ and a constant $C > 0$. Define
$$\tilde{k}(i,u,q_i) := k(i,u) - D(q(\cdot|i)\|p(\cdot|i,u)).$$  Denote the $\widehat\Phi(q,v)$ defined in (\ref{phihat}) above as $\widehat\Phi(q,v,c)$ in order to render explicit its dependence on the per stage cost function $c$. We can also define $\widehat\Phi(q,v,k)$ analogously. This leads to the convex program (over the set of mixed policies for the agent choosing $v$, i.e., over probability distributions on $\U_{s}$) given by:\\

\noindent Minimize $\max_{q\in  \mathcal{Q}}\widehat\Phi(q,v,c)$ subject to $\max_{q'\in  \mathcal{Q}}\widehat\Phi(q',v,k) \leq C.$\\

Then by standard Lagrange multiplier theory (see, e.g., \cite{Luen}), one can consider the unconstrained minimization of
$$\max_{q\in  \mathcal{Q}}\widehat\Phi(q,v,c) + \Gamma\left(\max_{q'\in  \mathcal{Q}}\widehat\Phi(q',v,k) - C\right),$$
where $\Gamma \geq 0$ is a Lagrange multiplier. We now write down the equivalent primal and dual linear programs and justify them later. We treat $\Gamma$ as a given parameter which parametrizes the primal and dual linear programs. \\

The primal linear program is:\\

\noindent  (LP-P$^*$) Minimize $\sum_{i\in   S}(\beta_i + \Gamma(\beta'_i - C))$ subject to:

\begin{align}
\beta_i &\geq \sum_{j\in   S}q(j|i)\beta_j, \ (i,q)\in  S\times \mathcal{Q}, \label{P1}
\\
V_i &\geq \sum_{u\in  U}\tilde{c}(i,q,u)y_i(u) - \beta_i + \sum_{j\in  S}q(j|i)V_j, \ (i,q) \in   S\times \mathcal{Q}, \label{P2}
\\
\beta_i' &\geq \sum_{j\in  S}q'(j|i)\beta_j', \ (i,q')\in  S\times \mathcal{Q}, \label{P3}
\\
V_i' &\geq \sum_{u\in  U}\tilde{k}(i,q',u)y_i(u) - \beta_i' + \sum_{j\in  S}q'(j|i)V_j', \ (i,q') \in   S\times \mathcal{Q}, \label{P4}
\\
y_i(u)&\geq 0, \ \sum_j y_j(u) = 1. \label{P5}
\end{align}

The dual linear program is: \\

\noindent(LP-D$^*$) Maximize $\sum_{i,j\in   S}w_{i}$ subject to:

\begin{align}
\sum_i\int_{\Q}
(\delta_{ij} - q(j|i))\mu(i,dq) &= 0, \ j\in  S,\label{D1}
\\
\sum_i\int_{\Q}(\delta_{ij} - q(j|i))\nu(i,dq) + \int_{\Q}\mu(j,dq') &= 1, \ j\in  S, \label{D2} \\
\sum_I\int_{\Q}
(\delta_{ij} - q'(j|i))\mu'(i,dq') &= 0, \ j\in  S,\label{D3}
\\
\sum_i\int_{\Q}(\delta_{ij} - q'(j|i))\nu'(i,dq') + \int_{q' \in \Q}\mu'(j,dq') &= 1, \ j\in  S, \label{D4} \\
\int_{\Q}\tilde{c}(i,q,u)\mu(i,dq) + \Gamma\left(\int_{\Q(i)}\tilde{k}(i,q',u)\mu'(i,dq') - C\right) &\geq w_{i}, \ (i,u)\in  S\times U, \label{D5}\\
\sum_i\int\mu(i,d\tilde{q} = \sum_i\int\mu'(j,d\tilde{q}) &= 1, \label{D6} \\
\mu(i,u), \mu'(i,u), \nu(i,u), \nu'(i,u)& \geq 0, \ (i,u)\in S\times\Q.    \label{D7}
\end{align}

\bigskip

Here the Lagrange multiplier $\Gamma$ is unknown a priori. It will be kept fixed in the statement and proof of the next theorem. We return to this point later.

\bigskip

\begin{theorem} The primal and dual linear programs (LP-P$^*$) and  (LP-D$^*$) above are the linear programming formulations of the constrained risk-sensitive control problem.\end{theorem}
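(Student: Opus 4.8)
The plan is to reduce the claim to the already-established unconstrained case (Theorem~\ref{minmax} together with the linear programs (LP-P) and (LP-D)) by treating the Lagrangian as a single risk-sensitive objective. First I would fix $\Gamma \geq 0$ and observe that the inner problem -- minimizing $\max_{q}\widehat\Phi(q,v,c) + \Gamma\left(\max_{q'}\widehat\Phi(q',v,k) - C\right)$ over $v$ -- is formally the sum of two single-controller game values in which \emph{the same} stationary policy $v$ appears in both, but the two fictitious adversaries choosing $q$ and $q'$ act independently. The key structural point to make explicit is that, because the two adversarial maximizations decouple (they share no variables), one may write the Lagrangian as a single min-max over $v$ and the \emph{pair} $(q,q')$, of the additively separable payoff $\widehat\Phi(q,v,c) + \Gamma\widehat\Phi(q',v,k) - \Gamma C$. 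This is again a single-controller game: the pair $(q,q')$ controls the transition structure of the product-like chain, $v$ controls only the (now two-component, affinely combined) payoff, and the constant $-\Gamma C$ is harmless. Hence Vrieze's linear-programming machinery \cite{Vrieze} applies verbatim, exactly as it did in \cite{Ari} for the single-cost problem.

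Next I would carry out the bookkeeping that turns that abstract statement into the displayed programs. On the primal side, the constraints \eqref{P1}--\eqref{P2} are precisely the (LP-P) constraints for the $c$-payoff with occupation/value variables $\beta_i, V_i$, and \eqref{P3}--\eqref{P4} are the same constraints for the $k$-payoff with a \emph{separate} copy $\beta_i', V_i'$ of the dual-value variables; the only coupling is that the policy variables $y_i(u)$ in \eqref{P5} are shared across \eqref{P2} and \eqref{P4}, which encodes that a single randomized stationary policy must serve both objectives. Summing the two objective contributions and adding $-\Gamma C$ gives the objective $\sum_i(\beta_i + \Gamma(\beta_i' - C))$ of (LP-P$^*$). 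On the dual side, \eqref{D1}--\eqref{D2} are the flow-balance constraints of (LP-D) for the measures $\mu,\nu$ associated with the $c$-chain, \eqref{D3}--\eqref{D4} the analogous constraints for $\mu',\nu'$ associated with the $k$-chain, and \eqref{D5} is the single coupled inequality obtained by taking the appropriate linear combination of the two ``$\geq w$'' constraints of the form \eqref{special}, weighted by $1$ and $\Gamma$, with the constant $-\Gamma C$ absorbed. I would verify that (LP-D$^*$) is literally the LP dual of (LP-P$^*$) by writing out the transpose of the constraint matrix block by block, checking that the shared $y$-variables produce exactly the single joined row \eqref{D5}. Then I would invoke, from the cited development, feasibility and boundedness of both programs, absence of a duality gap, and the fact that the common optimal value equals the game value, i.e.\ equals $\min_v\big[\max_q\widehat\Phi(q,v,c) + \Gamma(\max_{q'}\widehat\Phi(q',v,k) - C)\big]$; and that an optimal randomized stationary policy for the Lagrangian is recovered as $i\mapsto y_i(\cdot)$ from the primal optimum.

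Finally I would note the one genuinely non-cosmetic issue: the action sets $\Q(i)$ are simplices, hence not finite, so Vrieze's theorem does not apply directly; as in \cite{Ari},\cite{B-CDC} one must pass through finite discretizations of each $\Q(i)$ and take a limit, using continuity of $\widehat\Phi(\cdot,v,c)$ and $\widehat\Phi(\cdot,v,k)$ in $q$ (via continuity of the Kullback--Leibler term $D(q(\cdot|i)\|p(\cdot|i,u))$ on its effective domain) to pass the equalities and inequalities and the optimal values to the limit. Since the two payoffs are handled by the \emph{same} discretization of $\Q$ and the separable structure is preserved under discretization and under the limit, this argument is a routine adaptation of the one already given in \textit{ibid.}, and I would simply cite it rather than reproduce it.

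I expect the main obstacle to be purely expository: making airtight the claim that the two adversarial maximizations $\max_q$ and $\max_{q'}$ may be performed jointly (so that the Lagrangian really is a single single-controller game), and that this joint game's LP pair is exactly the ``disjoint union with shared policy variables'' displayed above. The finite-approximation/limiting step, while technically the least trivial, is already discharged in \cite{Ari},\cite{B-CDC} and transfers with no new ideas.
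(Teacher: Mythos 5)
Your block-structure observation (two copies of the unconstrained LP constraints, coupled only through the shared $y_i(u)$ in the primal and only through \eqref{D5} in the dual, with the constant $-\Gamma C$ absorbed in the objective) matches the paper. But the load-bearing step in your argument --- that the Lagrangian ``really is a single single-controller game'' over the pair $(q,q')$ on a product-like chain, to which Vrieze \cite{Vrieze} applies \emph{verbatim}, and that ``the bookkeeping'' then yields exactly (LP-P$^*$)/(LP-D$^*$) --- is asserted rather than proved, and it is precisely where the difficulty sits. Vrieze's LP for that joint game lives on the product state space $\SA^2$: its variables are indexed by pairs $(i,j)$, its adversary actions are kernels on $\SA^2$ that need not factor as $q\otimes q'$, its minimizer's stationary policies may depend on both coordinates, and its feasible points need not be additively separable in the sense $\check\beta_{(i,j)}=\beta_i+\Gamma\beta'_j$. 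So that LP is \emph{not} literally the ``disjoint union with shared policy variables'' you display, and identifying its value with that of (LP-P$^*$) is a genuine argument, not a transpose-the-matrix check. Worse, in this paper the product-chain equivalence is the \emph{subsequent} theorem, whose proof leans on the present one; taking it as the starting point risks circularity (or at least forces you to re-derive it independently, including the subtlety of whether the product-chain controller applies a single $u$ at $(i,j)$ or $v(i)$ and $v(j)$ componentwise).

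The paper avoids all of this by arguing componentwise. For (LP-P$^*$): freeze $\{y_i(u)\}$; then \eqref{P1}--\eqref{P2} and \eqref{P3}--\eqref{P4} decouple into two copies of Kallenberg's multi-chain average-reward LP \cite{Kal}, whose optimal values are the maxima over $q$ and $q'$; by the variational formulation of \cite{Ananth} these are exactly the two risk-sensitive costs under the frozen randomized policy; then let $y$ vary and invoke the min-max Theorem~\ref{minmax}. For (LP-D$^*$): fix $u$, recognize \eqref{D1}--\eqref{D2} and \eqref{D3}--\eqref{D4} as Kallenberg's dual LPs for the rewards $\tilde c(\cdot,\cdot,u)$ and $\tilde k(\cdot,\cdot,u)$, and observe that requiring \eqref{D5} for all $u$ together with the objective $\sum_i w_i$ implements the minimization over $u$ of the $q$-maximized payoff. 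If you want to salvage your route, you must supply the missing equivalence between the product-space Vrieze LP and the displayed two-block LP --- and the cleanest way to do that is essentially to reproduce the paper's decoupling argument, at which point the detour through the joint game buys you nothing. Your remarks on the finite discretization of $\Q(i)$ and the citation to \cite{Ari}, \cite{B-CDC} for feasibility, absence of a duality gap, and the limiting argument are fine and consistent with the paper.
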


\begin{proof} We do this in two steps.

\medskip

\noindent \textbf{Proof of the validity of (LP-P$^*$) :}

\medskip

Note that the pair  (\ref{P1})-(\ref{P2}) and the pair (\ref{P3})-(\ref{P4}) are coupled only through the variables $\{y_i(u)\}$ from (\ref{P5}). Thus if we fix $\{y_i(u)\}$, which is tantamount to freezing the randomized stationary policy of the player who controls only the payoff, the constraint pairs (\ref{P1})-(\ref{P2}) and (\ref{P3})-(\ref{P4}) decouple. The objective function being minimized is separable in the variables $\{\beta_i\}$ and $\{\beta_i'\}$. Thus the problem decouples into two \textit{single agent} control problems of resp., minimizing $\sum_i\beta_i$ subject to the constraints (\ref{P1})-(\ref{P2}), and minimizing $\sum_i\beta_i'$ subject to the constraints (\ref{P3})-(\ref{P4}). Both are precisely of the form of Kallenberg's linear program for the multi-chain case of average reward control \cite{Kal}. Therefore the respective optimal rewards are precisely the maxima over $q$, $q'$ of the respective objectives. Furthermore, by the results of \cite{Ananth}, these are precisely the  risk-sensitive rewards for running cost functions $c(\cdot,\cdot), k(\cdot,\cdot)$ resp., and a fixed stationary randomized policy  $y_\cdot(\cdot)$.

Next, allow $\{y_i(u)\}$ to vary, i.e., we require that the constraints hold for all choices of $\{y_i(u)\}$ subject to (\ref{P5}). This is equivalent to minimizing over $\{y_i(u)\}$ the maximum over $q, q'$ obtained above. In view of the min-max theorem (Theorem \ref{minmax}), we are done. 

\medskip

\noindent \textbf{Proof of the validity of (LP-D$^*$) :}

\medskip
 
Now the pair (\ref{D1})-(\ref{D2}) and the pair (\ref{D3})-(\ref{D4}) are coupled only through the constraint (\ref{D5}). Fix $u$. If we ignore (\ref{D5}), then (\ref{D1})-(\ref{D2}) along with the domains of $\mu, \nu$ specified as in (\ref{D6}), (\ref{D7}), is the feasible set for Kallenberg's dual linear program for multi-chain average reward Markov decision processes \cite{Kal}, where we take the average reward to be maximized to correspond to the per stage running reward function $\tilde{c}(i, q, u)$ with $u$ fixed. Likewise, the constraints (\ref{D3})-(\ref{D4}) correspond to the dual linear program for the average reward Markov decision process with running reward $\tilde{k}(i,q,u), i \in S, q \in \Q$, with $u \in \A$ kept fixed. The constraint (\ref{D5}) is required to hold for all $u$. This then forces the right hand side of (\ref{D5}) to be dominated by the minimum over $u$ of its left hand side. This, in view of the objective function $\sum_iw_i$ of (LP-D$^*$), achieves the minimization over $u$ of the payoff maximized over $q$. \\

This completes the proof. \end{proof}

\bigskip

This allows us to map the constrained risk-sensitive control problem into an unconstrained one just as in the case of classical cost/reward structures. For this purpose, define an $\SA^2$-valued Markov chain $Y_n = (Z_n, Z'_n), n \geq 0$, as follows. It is initiated at some $(i,i') \in \SA^2$ with transition probabilities $\check{p}((k,\ell)|(i,j), \check{q}, u) := \check{q}((k,\ell)|(i,j))$ given by
$$\check{q}((k,\ell)|(i,j)) := q(k|i)q'(\ell|j), \ i,j,k,\ell \in \SA.$$
In particular, the transitions are independent. The per stage cost is
$$\check{h}((i,j),u) := c(i,u) + \Gamma k(j,u).$$
The objective is to minimize  the risk-sensitive cost
\begin{equation}
\limsup_{N\uparrow\infty}\frac{1}{N}\log E\left[e^{\sum_{n=0}^{N-1}\check{h}(Y_n,\check{U}_n)}\right]. \label{cost2}
\end{equation}

\begin{theorem} The constrained risk-sensitive control problem under consideration is equivalent to the above unconstrained risk-sensitive problem in the sense that they have the same optima. \end{theorem}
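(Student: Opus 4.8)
The plan is to apply the reduction of Section~2 --- risk-sensitive control is equivalent to a single controller game, which is in turn equivalent to a primal--dual pair of linear programs --- to the controlled chain $\{Y_n\}$, and then to verify that the resulting pair is, modulo an inessential normalization, precisely (LP-P$^*$) and (LP-D$^*$). Combined with the previous theorem, which identifies those programs with the constrained problem, this yields the asserted coincidence of optima.

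Concretely, first I would write down the Section~2 primal and dual linear programs for $\{Y_n\}$: the action set at $(i,j)$ is $\check\Q((i,j))\times\A$, the adversarially chosen kernel is $\check q((k,\ell)\smid(i,j)) = q(k\smid i)\,q'(\ell\smid j)$, the uncontrolled kernel is the product $\check p(\cdot\smid(i,j),u) = p(\cdot\smid i,u)\otimes p(\cdot\smid j,u)$, and the running payoff is $\check h((i,j),u) - D(\check q(\cdot\smid(i,j))\,\|\,\check p(\cdot\smid(i,j),u))$. By Section~2, the value of the associated single controller game equals the optimum of (\ref{cost2}), and Theorem~\ref{minmax} applied to $\{Y_n\}$ supplies the attendant min--max identity and saddle point.

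The heart of the proof is to exploit the product structure. Since $\check p(\cdot\smid(i,j),u)$ is a product measure, for fixed marginals the coupling minimizing the divergence against it is the product of those marginals, with value $D(q(\cdot\smid i)\|p(\cdot\smid i,u)) + D(q'(\cdot\smid j)\|p(\cdot\smid j,u))$; as the adversary's payoff carries the term $-D$, the adversary forfeits nothing by restricting to product couplings $\check q = q\otimes q'$. Products of invariant measures of the two factor chains are invariant for the product chain, so the maximization over $\M_{\check q}$ inside $\widehat\Phi$ respects this restriction, and in the multichain case the product analogue of the partition $\SA = \cup_\ell I_\ell$ lets that maximization decouple across coordinates. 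With $\check q$ a product and $\check h((i,j),u) = c(i,u)+\Gamma k(j,u)$ additively separable, the program for $\{Y_n\}$ separates: one checks that an optimal value-function vector may be taken additive --- a $\beta$-part depending on $i$ together with $\Gamma$ times a $\beta'$-part depending on $j$, and likewise for the $V$'s --- that the stationary randomized policy collapses to a map $i\mapsto y_i(\cdot)$ on $\SA$ (the same policy must serve both coordinates, and the $u$-dependence of the payoff enters only through $c(i,u)$ and $k(j,u)$), and that the constraint system reduces to (\ref{P1})--(\ref{P5}); dualizing reduces the dual program to (\ref{D1})--(\ref{D7}), the coordinate marginals of the product-chain dual variables playing the roles of $\mu,\nu$ and $\mu',\nu'$. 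Hence the optimum of the $\{Y_n\}$-program equals the optimum of (\ref{cost2}) on the one hand and, by the previous theorem, the optimum of the constrained problem on the other.

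I expect the main obstacle to be this reduction to product couplings, together with the bookkeeping that reconciles the Lagrange weight $\Gamma$ --- which multiplies $k$ in $\check h$ but, in (LP-D$^*$), multiplies the \emph{un}-weighted twisted cost $\tilde k = k - D$ --- with the relative-entropy penalties generated by the product-chain construction. I would handle this at the level of the linear programs, by exhibiting mutually feasible solutions of the $\{Y_n\}$-program and of (LP-P$^*$) and (LP-D$^*$) with matching objective values (up to the normalization above), rather than arguing through the game. A purely probabilistic route --- relating $\frac{1}{N}\log\Exp[\exp(\sum_{n}\check h(Y_n,\check U_n))]$ directly to the two individual risk-sensitive costs via the multiplicative ergodic theorem --- is impeded by the coupling between $Z_n$ and $Z'_n$ through the shared control, and looks harder without the linear-programming intermediary. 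Some extra care is also needed with the multichain decomposition so that the maximization over $q$ genuinely decouples across the two coordinates.
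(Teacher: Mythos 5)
Your proposal runs the reduction in the opposite direction from the paper, and the direction you chose is the one that carries all the difficulty. The paper's proof is essentially a one-line substitution: set $\check\beta_{(i,j)} := \beta_i + \Gamma\beta_j'$ and $\check V_{(i,j)} := V_i + \Gamma V_j'$, add (\ref{P1}) to $\Gamma\times$(\ref{P3}) and (\ref{P2}) to $\Gamma\times$(\ref{P4}) (using only that $q(\cdot|i)$ and $q'(\cdot|j)$ are probability vectors, so that the cross terms assemble into $\check q = q\otimes q'$), and read off the combined system (\ref{C1})--(\ref{C2}) together with (\ref{P5}) as the linear program of the unconstrained product-chain problem; no decomposition of an optimal solution of the product-chain program is ever needed. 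You instead start from the Section~2 LPs for $\{Y_n\}$ with an adversary ranging over \emph{all} kernels on $\SA^2$ and try to come back down to (LP-P$^*$)/(LP-D$^*$), which requires you to prove (a) that the adversary may be restricted to product couplings, (b) that an optimal $(\check\beta,\check V)$ may be taken additively separable, and (c) that the policy collapses to a map on a single coordinate. None of these is carried out, and they are precisely where the content lies.

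Two of the deferred steps are genuine gaps, not bookkeeping. First, your product-coupling reduction: for a general coupling $\check q(\cdot\,|\,(i,j))$ the coordinate marginals are kernels in \emph{both} $i$ and $j$, not of the form $q(\cdot|i)$, $q'(\cdot|j)$, and the inner maximization over $\M_{\check q}$ in $\hat{\Phi}$ depends on the full coupling (its invariant measures, and the multichain partition, are not determined by the marginal kernels); so the chain rule $D(\check q\|p\otimes p) = D(\check q\|q^{(1)}\otimes q^{(2)}) + D(q^{(1)}\|p) + D(q^{(2)}\|p)$ by itself does not show the adversary forfeits nothing. Second, the $\Gamma$ issue you flag is not "an inessential normalization" but the crux: the Section~2 twisted payoff for $\{Y_n\}$ with running cost $\check h = c + \Gamma k$ is $c(i,u) + \Gamma k(j,u) - D(q(\cdot|i)\|p(\cdot|i,u)) - D(q'(\cdot|j)\|p(\cdot|j,u))$, whereas (LP-P$^*$)/(LP-D$^*$) carry $\tilde c + \Gamma\tilde k = c - D(q\|p) + \Gamma\bigl(k - D(q'\|p)\bigr)$; these coincide only for $\Gamma = 1$ (equivalently, $\Gamma\lambda^v(k) \neq \lambda^v(\Gamma k)$ in general, since the risk-sensitive rate is not positively homogeneous in the running cost). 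Hence "exhibiting mutually feasible solutions with matching objective values up to the normalization" cannot succeed as stated: the program your construction produces is not (LP-P$^*$)/(LP-D$^*$) unless the divergence of the second coordinate is itself weighted by $\Gamma$, i.e., unless you define the unconstrained product-chain program to be exactly the one with constraints (\ref{C1})--(\ref{C2}) --- which is what the paper's weighted-sum substitution delivers directly and what your route would still have to establish by hand.
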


\begin{proof} Define $\check{V}_i := V_i + \Gamma V_i', \check{\beta}(i,j) := \beta_i + \Gamma \beta_j' \ \forall \ i,j$. Then summing the l.h.s.\ (resp., the r.h.s.) of (\ref{P1}) and $\Gamma\times$ the l.h.s.\ (resp., the r.h.s.) of (\ref{P3}), we get
\begin{equation}
\check{\beta}_{(i,j)} \geq \sum_{(k,\ell)}\check{q}((k,\ell)|(i,j))\check{\beta}_{(k,\ell)}. \label{C1}
\end{equation}
Likewise, summing the l.h.s.\ (resp., the r.h.s.) of (\ref{P2}) and $\Gamma\times$ the l.h.s.\ (resp., the r.h.s.) of (\ref{P4}), we get
\begin{equation}
\check{V}_{(i,j)} \geq \sum_{u\in \A}\tilde{h}((i,j),\check{q},u)y_i(u) - \check{\beta}_{(i,j)}
+ \sum_{(k,\ell) \in \SA^2}\check{q}((k,\ell)|(i,j))\check{V}_{(k,\ell)}. \label{C2}
\end{equation}
Consider the linear program that seeks to minimize $\sum_{(i,j) \in \SA^2}\check{\beta}_{(i,j)}$ subject to constraints (\ref{C1}), (\ref{C2}) and (\ref{P5}). This is precisely the linear program corresponding to the undiscounted risk-sensitive control problem of minimizing 
(\ref{cost2}). Since both this and the original problem attain the same optimum, the claim follows.
% For the converse, note that once $\{y_i(u)\}$ are fixed, by our definitions of $\check{q}, \{\check{\beta}_i\}$ and $h$, the two maximization problems over $q, q'$ decouple, allowing for separate maximization over $q, q'$. In view of the min-max theorem, we can maximize over $q,q'$ first and then minimize over $\{y_i(u)\}$. This proves the converse.
\end{proof}

In particular, this allows us to solve an equivalent unconstrained problem. The problem is that this pre-supposes the knowledge of $\Gamma$.
In practice, $\Gamma$ too needs to be computed. 
One can use the following `primal-dual' scheme. Start with an initial guess for $\Gamma$, say $\Gamma_0 \geq 0$, and update it as follows. At step $n\geq 0$, solve the above linear programs for $\Gamma = \Gamma_n$. Let $\mu^n(\cdot, \cdot)$ be the optimal $\mu'(\cdot, \cdot)$ from the dual linear program (LP-D$^*$) and $y^n_\cdot(\cdot)$  the optimal $y_\cdot(\cdot)$ for the primal linear program (LP-P$^*$), under $\Gamma = \Gamma_n$. Perform the iterate
\begin{equation}
\Gamma_{n+1} = \left[\Gamma_n + a(n)\left(\int_{\Q(i)}\sum_u\tilde{k}(i,q',u)y_i^n(u)d\mu^n(i,dq') - \log C\right)\right]^+, \label{ascent}
\end{equation}
where $a(n) > 0$ is a stepsize sequence satisfying $a(n)\to 0, \ \sum_na(n) = \infty$, and $[\cdots]^+$ denotes the projection to the positive real axis. Letting 
$$\Psi(\Gamma) := \min_v\left(\max_{q\in\Q}\hat{\Phi}(q,v,c) + \Gamma\left(\max_{q'\in\Q}\hat{\Phi}(q',v,k) - C\right)\right),$$
we note that this is a concave function of $\Gamma$ on $[0,\infty)$. From the standard Lagrange multiplier theory \cite{Luen}, the Lagrange multiplier is precisely a maximizer of this function. Using Danskin's theorem \cite{Danskin}, (\ref{ascent}) is then seen to be  the projected subgradient ascent for finding the Lagrange multiplier. By standard theory of subgradient ascent, it will a.s.\ converge to a maximizer of $\Psi(\cdot)$, i.e., a Lagrange multiplier.

\section{Reinforcement learning}

In the preceding section, the linear programs were treated as a subroutine within the subgradient ascent for the Lagrange multiplier. While they were useful intermediaries to derive the equivalent unconstrained risk-sensitive control problem with cost (\ref{cost2}), now that this has been done, one can fall back on other classical iterative schemes for risk-sensitive control such as policy iteration and value iteration (see \cite{Biswas} for a survey). They can in fact be run on a faster time scale to emulate a subroutine, using the standard `two time scale' paradigm. That is, we run the two iterations (i.e., value/policy iteration and the subgradient ascent) with different stepsizes (viewed as small time steps), say, $\{a(n)\}$ and $\{b(n)\}$, with the usual requirements that $a(n), b(n) \to 0$ and $\sum_na(n) = \sum_nb(n) = \infty$, along with the additional requirement that  $b(n) = o(a(n))$. The latter condition ensures that the former iteration runs on a faster algorithmic time scale and sees the latter iteration as quasi-static, while the latter running on a slower time scale  sees the former as quasi-equilibrated so that it emulates a subroutine. This is standard fare in stochastic approximation and has been extensively studied, see, e.g., Chapter 8 of \cite{BorkarBook}. The additional condition $\sum_n(a(n)^2+b(n)^2) < \infty$ required in stochastic approximation in order to contain the effect of noise is, however, not needed here. It is needed though in the stochasic counterparts of the above, viz., reinforcement learning, which we discuss next.

 An immediate spin-off of the foregoing is the fact that the existing reinforcement learning algorithms for risk-sensitive cost, such as Q-learning \cite{Q}, \cite{BBB}, actor-critic \cite{A-C} or policy gradient \cite{Mohr} can be immediately adapted to the constrained problem by using them for the equivalent unconstrained risk-sensitive control problem with cost (\ref{cost2}), and then appending a stochastic subgradient ascent for learning the Lagrange multiplier on a slower time scale,  given by
$$\Gamma(n+1) = \Gamma(n) + b(n)(k(X_n,U_n) - \log C), \ n \geq 0,$$
where the stepsize $\{b(n)\}$ satisfies the usual Robbins-Monro conditions: 
$$\sum_n b(n) = \infty, \ \sum_n b^2(n) < \infty,$$ 
along with the additional condition $b(n) = o(a(n))$ for any stepsize sequence $\{a(n)\}$ being used in the (algorithm-dependent) primal iteration. (Note that $\{a(n)\}$ is required to satisfy the Robbins-Monro conditions too.) This ensures that the Lagrange multiplier update is on a slower time scale, which facilitates the application of the theory of two time scale analysis. We omit the details as they are fairly routine (\cite{BorkarBook}, Chapter 8).

\bigskip

\end{document}